\theoremstyle{plain}
\newtheorem{theorem}{Theorem}
\newtheorem{cor}[theorem]{Corollary}
\theoremstyle{definition}
\newtheorem*{defi}{Definition}
\newtheorem{rem}[theorem]{Remark}
\title{The double competition multigraph of a digraph}
\author{
{\sc Jeongmi PARK}
\footnote{
Department of Mathematics, 
Pusan National University, 
Busan 609-735, Korea. 
{\it E-mail}: {\tt jm1015@pusan.ac.kr}}
\and 
{\sc Yoshio SANO}
\thanks{This work was supported by JSPS KAKENHI Grant Numbers 25887007, 15K20885.}
\footnote{
Division of Information Engineering, 
Faculty of Engineering, Information and Systems,  
University of Tsukuba, 
Ibaraki 305-8573, Japan.
{\it E-mail}: {\tt sano@cs.tsukuba.ac.jp}} 
}
\date{}
\begin{document}

\maketitle

\begin{abstract}
In this article, 
we introduce the notion of the double competition 
multigraph of a digraph. 
We give characterizations of the double competition 
multigraphs of arbitrary digraphs, 
loopless digraphs, reflexive digraphs, 
and acyclic digraphs 
in terms of edge clique partitions of the multigraphs. 
\end{abstract}


\noindent
\textbf{Keywords:} 
competition graph, 
competition multigraph, 
competition-common enemy graph, 
double competition multigraph, 
edge clique partition. 

\noindent
\textbf{2010 Mathematics Subject Classification:} 
05C20, 05C75.

\section{Introduction}

The competition graph of a digraph is defined to be 
the intersection graph of the family of the out-neighborhoods of 
the vertices of the digraph (see \cite{MM99} for intersection graphs). 
A \emph{digraph} $D$ is a pair $(V(D),A(D))$ of a set $V(D)$ 
of \emph{vertices} and a set $A(D)$ of ordered pairs of vertices, 
called \emph{arcs}. 
An arc of the form $(v,v)$ is called a \emph{loop}. 
For a vertex $x$ in a digraph $D$,  
we denote the \emph{out-neighborhood} of $x$ in $D$ by 
$N^+_D(x)$ and the \emph{in-neighborhood} of $x$ in $D$ by 
$N^-_D(x)$, i.e., 
$N^+_D(x) := \{v \in V(D) \mid (x,v) \in A(D) \}$ 
and 
$N^-_D(x) := \{v \in V(D) \mid (v,x) \in A(D) \}$. 
A \emph{graph} $G$ is a pair $(V(G),E(G))$ of a set $V(G)$ 
of \emph{vertices} and a set $E(G)$ of unordered pairs of vertices, 
called \emph{edges}. 
The \emph{competition graph} of a digraph $D$ 
is the graph which has the same vertex set as $D$ 
and has an edge between two distinct vertices $x$ and $y$ 
if and only if $N^+_D(x) \cap N^+_D(y) \neq \emptyset$. 
R. D. Dutton and R. C. Brigham \cite{DB83} and 
F. S. Roberts and J. E. Steif \cite{RS83} 
gave characterizations of competition graphs 
by using edge clique covers of graphs. 
The notion of competition graphs was 
introduced by J. E. Cohen \cite{Cohen} in 1968 
in connection with a problem in ecology, and 
several variants and generalizations of competition graphs 
have been studied. 

In 1987, D. D. Scott \cite{Scott} 
introduced the notion of double competition graphs 
as a variant of 
the notion of competition graphs. 
The \emph{double competition graph} 
(or the \emph{competition-common enemy graph} or the \emph{CCE graph})
of a digraph $D$ 
is the graph which has the same vertex set as $D$ 
and has an edge between two distinct vertices $x$ and $y$ 
if and only if both 
$N^+_D(x) \cap N^+_D(y) \neq \emptyset$ and 
$N^-_D(x) \cap N^-_D(y) \neq \emptyset$ hold. 
See \cite{KKR07, LW09, Sano10, WL10} for recent results on double competition graphs. 

A \emph{multigraph} $M$ is a pair $(V(M),E(M))$
of a set $V(M)$ of \emph{vertices} 
and a multiset $E(M)$ of unordered pairs of vertices, called \emph{edges}. 
Note that, in our definition, multigraphs have no loops. 
We may consider a multigraph $M$ as 
the pair $(V(M),m_M)$ of the vertex set $V(M)$ and 
the nonnegative integer-valued function 
$m_M: {V \choose 2} \to \mathbb{Z}_{\geq 0}$ 
on the set ${V \choose 2}$ of all unordered pairs of $V$ 
where $m_M(\{x,y\})$ is defined to be the number of multiple edges between 
the vertices $x$ and $y$ in $M$. 
The notion of competition multigraphs was introduced by 
C. A. Anderson, K. F. Jones, J. R. Lundgren, and T. A. McKee 
\cite{AJLM90} in 1990 as a variant of 
the notion of competition graphs. 
The \emph{competition multigraph} of a digraph $D$ 
is the multigraph which has the same vertex set as $D$ 
and has $m_{xy}$ multiple edges between two distinct vertices $x$ and $y$, 
where $m_{xy}$ is the nonnegative integer defined by 
$m_{xy} = |N^+_D(x) \cap N^+_D(y)|$. 
See \cite{Sano09, ZC10} for recent results on competition multigraphs. 

In this article, we introduce the notion of 
the double competition multigraph of a digraph, and 
we give characterizations of the double competition 
multigraphs of arbitrary digraphs, 
loopless digraphs, reflexive digraphs, 
and acyclic digraphs 
in terms of edge clique partitions of the multigraphs. 

\section{Main Results}

We define the double competition multigraph of a digraph
as follows. 

\begin{defi}
Let $D$ be a digraph. 
The \emph{double competition multigraph} of $D$ 
is the multigraph which has the same vertex set as $D$ 
and has $m_{xy}$ multiple edges between two distinct vertices $x$ and $y$, 
where $m_{xy}$ is the nonnegative integer defined by 
\[
m_{xy} = |N^+_D(x) \cap N^+_D(y)| 
\cdot |N^-_D(x) \cap N^-_D(y)|, 
\]
i.e., the multigraph $M$ defined by 
$V(M)=V(D)$ and $m_M(\{x,y\})=m_{xy}$. 
\qed
\end{defi}

Recall that a \emph{clique} of a multigraph $M$ is 
a set of vertices of $M$ which are pairwise adjacent. 
We consider the empty set $\emptyset$ as a clique of any multigraph 
for convenience. 
A multiset is also called a \emph{family}. 
An \emph{edge clique partition} of a multigraph $M$ 
is a family $\mathcal{F}$ of cliques of $M$ 
such that any two distinct vertices $x$ and $y$ 
are contained in exactly $m_M(\{x,y\})$ cliques 
in the family $\mathcal{F}$. 
For a positive integer $n$, 
let $[n]$ denote the set $\{1,2, \ldots, n\}$. 


\begin{theorem}\label{thm:1}
Let $M$ be a multigraph with $n$ vertices. 
Then, 
$M$ is the double competition multigraph of an arbitrary digraph 
if and only if 
there exist an ordering $(v_1, \ldots, v_n)$ 
of the vertices of $M$ 
and a double indexed edge clique partition 
$\{ S_{ij} \mid i,j \in [n] \}$ of $M$ 
such that the following condition holds: 
\begin{itemize}
\item[{\rm (I)}] 
for any $i,j \in [n]$, 
if $|A_i \cap B_j| \geq 2$, then $A_i \cap B_j = S_{ij}$, 
\end{itemize}
where $A_i$ and $B_j$ are the sets defined by  
\begin{equation}\label{eq:Ai} 
A_i = S_{i *} \cup T^+_i,  
\quad
S_{i *} := \bigcup_{p \in [n]} S_{ip},  
\quad
T^+_i := \{ v_b \mid a,b \in [n], v_i \in S_{ab} \}, 
\end{equation}
\begin{equation}\label{eq:Bj} 
B_j = S_{* j} \cup T^-_j,  
\quad 
S_{* j} := \bigcup_{q \in [n]} S_{qj},  
\quad 
T^-_j := \{ v_a \mid a,b \in [n], v_j \in S_{ab} \}. 
\end{equation}
\end{theorem}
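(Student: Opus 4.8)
The plan is to prove both directions by moving between a digraph $D$ and the double-indexed family given by $S_{ij} := N^-_D(v_i) \cap N^+_D(v_j)$.

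For necessity, suppose $M$ is the double competition multigraph of $D$, fix \emph{any} ordering $(v_1,\dots,v_n)$ of $V(D) = V(M)$, and set $S_{ij} := N^-_D(v_i)\cap N^+_D(v_j)$ for $i,j \in [n]$. The key elementary fact is that, for distinct $x,y$, one has $x,y \in S_{ab}$ if and only if $v_a \in N^+_D(x)\cap N^+_D(y)$ and $v_b \in N^-_D(x)\cap N^-_D(y)$; this immediately shows each $S_{ab}$ is a clique of $M$ and that the number of pairs $(a,b)$ with $x,y \in S_{ab}$ equals $|N^+_D(x)\cap N^+_D(y)|\cdot|N^-_D(x)\cap N^-_D(y)| = m_M(\{x,y\})$, i.e.\ $\{S_{ij}\mid i,j\in[n]\}$ is an edge clique partition. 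I would then verify condition (I) in the strong form $A_i\cap B_j = S_{ij}$ for \emph{all} $i,j$: the inclusions $S_{i*}\subseteq N^-_D(v_i)$ and $T^+_i\subseteq N^-_D(v_i)$ give $A_i\subseteq N^-_D(v_i)$, and dually $B_j\subseteq N^+_D(v_j)$, so $A_i\cap B_j\subseteq S_{ij}$; conversely $S_{ij}\subseteq S_{i*}\subseteq A_i$ and $S_{ij}\subseteq S_{*j}\subseteq B_j$.

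For sufficiency, given the ordering and a family $\{S_{ij}\}$ satisfying (I), I would construct $D$ on vertex set $V(M)$ by declaring $(v_i,v_j)\in A(D)$ exactly when $v_j\in B_i$ (loops allowed, which is why ``arbitrary digraph'' is the right class here). The point that makes this well defined in the way we want is a duality lemma: unwinding the definitions of $S_{i*}, T^+_i, S_{*j}, T^-_j$ and using that $(v_1,\dots,v_n)$ lists distinct vertices yields $v_i\in A_j \iff v_j\in B_i$ for all $i,j$. Consequently $N^+_D(v_i) = B_i$ and $N^-_D(v_i) = A_i$. It then remains to check that for distinct $v_i,v_j$ the product $|N^+_D(v_i)\cap N^+_D(v_j)|\cdot|N^-_D(v_i)\cap N^-_D(v_j)|$ equals $m_M(\{v_i,v_j\})$. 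Here $v_k\in B_i\cap B_j$ iff $\{v_i,v_j\}\subseteq A_k$, and $v_l\in A_i\cap A_j$ iff $\{v_i,v_j\}\subseteq B_l$, so the product counts $|\{k:\{v_i,v_j\}\subseteq A_k\}|\cdot|\{l:\{v_i,v_j\}\subseteq B_l\}|$; on the other hand the edge-clique-partition property says $m_M(\{v_i,v_j\}) = |\{(a,b):\{v_i,v_j\}\subseteq S_{ab}\}|$. So the whole sufficiency argument reduces to the identity
\[
\{(a,b):\{v_i,v_j\}\subseteq S_{ab}\} \;=\; \{a:\{v_i,v_j\}\subseteq A_a\}\times\{b:\{v_i,v_j\}\subseteq B_b\},
\]
whose ``$\subseteq$'' part follows from $S_{ab}\subseteq A_a$ and $S_{ab}\subseteq B_b$, and whose ``$\supseteq$'' part is exactly where condition (I) is invoked: $\{v_i,v_j\}\subseteq A_a\cap B_b$ forces $|A_a\cap B_b|\ge 2$, hence $A_a\cap B_b = S_{ab}$. (Degenerate cases, where one of the two index sets on the right is empty, are covered uniformly, matching the pairs with $m_M(\{v_i,v_j\}) = 0$.)

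The step I expect to be the main obstacle is pinning down the sufficiency construction: one must guess the correct digraph, and the delicate point is that arcs are ``demanded'' both by the requirement that $A_i$ play the role of $N^-_D(v_i)$ and by the requirement that $B_j$ play the role of $N^+_D(v_j)$, so these two demands must coincide on every ordered pair. This consistency is precisely the content of the duality lemma $v_i\in A_j\iff v_j\in B_i$, and it is the reason the ``transpose'' terms $T^+_i$ and $T^-_j$ are built into the definitions of $A_i$ and $B_j$. Once that lemma is in hand, everything else — the clique and partition checks in necessity, and the product identity in sufficiency — is routine bookkeeping.
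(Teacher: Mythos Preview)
Your proof is correct and follows the same overall route as the paper's: in the only-if direction take $S_{ij}$ to be the set of midpoints of directed $2$-paths between $v_i$ and $v_j$, and in the if direction rebuild $D$ from the $S_{ij}$. Two cosmetic differences are worth flagging. First, your convention $S_{ij}=N^-_D(v_i)\cap N^+_D(v_j)$ is the transpose of the paper's $S_{ij}=N^+_D(v_i)\cap N^-_D(v_j)$; this is harmless because the double competition multigraph is invariant under arc reversal, and correspondingly your reconstructed digraph (with $N^+_D(v_i)=B_i$) is exactly the reverse of the paper's (whose arc set $\bigcup_{v_k\in S_{ij}}\{(v_i,v_k),(v_k,v_j)\}$ unwinds to $N^+_D(v_i)=A_i$). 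Second, your packaging is tighter: the inclusions $A_i\subseteq N^-_D(v_i)$ and $B_j\subseteq N^+_D(v_j)$ give $A_i\cap B_j=S_{ij}$ in one stroke, whereas the paper checks the four cases of $(S_{i*}\cup T^+_i)\cap(S_{*j}\cup T^-_j)$ separately; and in sufficiency your duality lemma $v_i\in A_j\Leftrightarrow v_j\in B_i$ identifies $N^\pm_D$ exactly, so the product identity $\{(a,b):\{v_i,v_j\}\subseteq S_{ab}\}=\{a:\{v_i,v_j\}\subseteq A_a\}\times\{b:\{v_i,v_j\}\subseteq B_b\}$ replaces the paper's two-sided inequality $m_M\le m_{M'}$, $m_M\ge m_{M'}$.
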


\begin{proof} 
First, we show the only-if part. 
Let $M$ be the double competition multigraph of an arbitrary digraph $D$. 
Let $(v_1, \ldots, v_n)$ be an ordering of the vertices of $D$. 
For $i,j \in [n]$, we define 
\begin{equation}\label{eq:Sij}
S_{ij} := \{v_k \in V(D) \mid (v_i, v_k), (v_k, v_j) \in A(D) \}. 
\end{equation}
Then $S_{ij}$ is 
a clique of $M$. 
Let $\mathcal{F}$ be the family of $S_{ij}$'s whose size is at least two, 
i.e., 
\begin{equation}\label{eq:F}
\mathcal{F} := \{S_{ij} \mid i,j \in [n], |S_{ij}| \geq 2 \}. 
\end{equation}
By the definition of a double competition multigraph, 
$\mathcal{F}$ is an edge clique partition of $M$. 

We show that the condition (I) holds.
Fix $i$ and $j$ in $[n]$ and let 
$A_i$ and $B_j$ be sets as defined in (\ref{eq:Ai}) and (\ref{eq:Bj}). 
Since $S_{ij} \subseteq A_i$ and 
$S_{ij} \subseteq B_j$, it holds that 
$S_{ij} \subseteq A_i \cap B_j$. 
Now we assume that $|A_i \cap B_j| \geq 2$ 
and take any vertex $v_k \in A_i \cap B_j$. 
There are four cases for $v_k$ arising from the definitions 
of $A_i$ and $B_j$ as follows: 
(i) 
$v_k \in S_{i *} \cap S_{* j}$; 
(ii) 
$v_k \in S_{i *} \cap T^-_j$; 
(iii) 
$v_k \in T^+_i \cap S_{* j}$; 
(iv) 
$v_k \in T^+_i \cap T^-_j$. 
To show $A_i \cap B_j \subseteq S_{ij}$, 
we will check that $v_k \in S_{ij}$ for each case. 
Consider the case (i). 
Since $v_k \in S_{i *}$, 
there exists $p \in [n]$ such that 
$v_k \in S_{ip}$. 
Since $v_k \in S_{* j}$, 
there exists $q \in [n]$ such that 
$v_k \in S_{qj}$. 
By (\ref{eq:Sij}), 
$v_k \in S_{ip}$ implies 
$(v_i, v_k), (v_k, v_p) \in A(D)$, 
and $v_k \in S_{qj}$ implies 
$(v_q, v_k), (v_k, v_j) \in A(D)$. 
Therefore we have  
$(v_i, v_k), (v_k, v_j) \in A(D)$, 
which implies $v_k \in S_{ij}$. 
Consider the case (ii).  
Since $v_k \in S_{i *}$, 
there exists $p \in [n]$ such that 
$v_k \in S_{ip}$. 
Since $v_k \in T^-_j$, 
there exists $b \in [n]$ 
such that $v_j \in S_{kb}$. 
By (\ref{eq:Sij}), 
$v_k \in S_{ip}$ implies 
$(v_i, v_k), (v_k, v_p) \in A(D)$, 
and $v_j \in S_{kb}$ implies 
$(v_k, v_j), (v_j, v_b) \in A(D)$. 
Therefore we have 
$(v_i, v_k), (v_k, v_j) \in A(D)$, 
which implies $v_k \in S_{ij}$. 
Consider the case (iii).  
Since $v_k \in T^+_i$, 
there exists $a \in [n]$ 
such that $v_i \in S_{ak}$. 
Since $v_k \in S_{* j}$, 
there exists $q \in [n]$ such that 
$v_k \in S_{qj}$. 
By (\ref{eq:Sij}), 
$v_i \in S_{ak}$ implies 
$(v_a, v_i), (v_i, v_k) \in A(D)$, 
and 
$v_k \in S_{qj}$ implies 
$(v_q, v_k), (v_k, v_j) \in A(D)$. 
Therefore we have 
$(v_i, v_k), (v_k, v_j) \in A(D)$, 
which implies $v_k \in S_{ij}$. 
Consider the case (iv).  
Since $v_k \in T^+_i$, 
there exists $a \in [n]$ 
such that $v_i \in S_{ak}$. 
Since $v_k \in T^-_j$, 
there exists $b \in [n]$ 
such that $v_j \in S_{kb}$. 
By (\ref{eq:Sij}), 
$v_i \in S_{ak}$ implies 
$(v_a, v_i), (v_i, v_k) \in A(D)$, 
and 
$v_j \in S_{kb}$ implies 
$(v_k, v_j), (v_j, v_b) \in A(D)$. 
Therefore we have 
$(v_i, v_k), (v_k, v_j) \in A(D)$, 
which implies $v_k \in S_{ij}$. 
Thus we obtain $A_i \cap B_j \subseteq S_{ij}$, 
and so $A_i \cap B_j = S_{ij}$. 
Hence the condition (I) holds. 

Next, we show the if part. 
Let $M$ be a multigraph with $n$ vertices, and  
suppose that 
there exist an ordering $(v_1, \ldots, v_n)$ 
of the vertices of $M$ 
and a double indexed edge clique partition 
$\mathcal{F} = \{ S_{ij} \mid i,j \in [n] \}$ of $M$ 
such that the condition (I) holds. 

We define a digraph $D$ by 
$V(D) := V(M)$ and 
\begin{equation}\label{eq:AD}
A(D) := 
\bigcup_{i,j \in [n]} 
\left(
\bigcup_{v_k \in S_{ij}} 
\{(v_i, v_k), (v_k, v_j) \} 
\right). 
\end{equation}
Let $M'$ denote the double competition multigraph of $D$. 
We show that $M=M'$. 
Since $V(M)=V(M')$, it is enough to show $m_{M}=m_{M'}$. 
Take any two distinct vertices $v_k$ and $v_l$ 
and let $t:=m_M(\{v_{k}, v_{l}\})$. 
Since $\mathcal{F}$ is an edge clique partition of $M$, 
the vertices $v_k$ and $v_l$ are contained in 
exactly $t$ cliques $S_{ij} \in \mathcal{F}$. 
So, for some nonnegative integers $r$ and $s$ with $rs=t$, 
there are $r$ common in-neighbors $v_{i_1}, \ldots, v_{i_r}$ 
and $s$ common out-neighbors $v_{j_1}, \ldots, v_{j_s}$ 
of the vertices $v_k$ and $v_l$ in $D$. 
Therefore it follows that $m_{M'}(\{v_k, v_l\}) 
= |N^-_D(v_k) \cap N^-_D(v_l)| \cdot |N^+_D(v_k) \cap N^+_D(v_l)| 
\geq rs =t$. 
Thus $m_{M}(\{v_k, v_l\}) \leq m_{M'}(\{v_k, v_l\})$. 
Again, take any two distinct vertices $v_k$ and $v_l$ 
and let $t':=m_{M'}(\{v_{k}, v_{l}\})$. 
Then, for some nonnegative integers $r'$ and $s'$ with $r's'=t'$, 
there are $r'$ common in-neighbors $v_{i_1}, \ldots, v_{i_{r'}}$ 
and $s'$ common out-neighbors $v_{j_1}, \ldots, v_{j_{s'}}$ 
of the vertices $v_k$ and $v_l$ in $D$. 
For each $i \in \{i_1, \ldots, i_{r'}\}$, 
since $(v_i, v_k), (v_i, v_l) \in A(D)$, 
it follows that $\{v_k, v_l\} \subseteq A_i$. 
Similarly, for each $j \in \{j_1, \ldots, j_{s'}\}$, 
since $(v_k, v_j), (v_l, v_j) \in A(D)$, 
it follows that $\{v_k, v_l\} \subseteq B_j$. 
Therefore, $\{v_k, v_l\} \subseteq A_i \cap B_j$ 
for any $i \in \{i_1, \ldots, i_{r'}\}$ and 
any $j \in \{j_1, \ldots, j_{s'}\}$. 
By the condition (I), we have $A_i \cap B_j = S_{ij}$. 
Therefore $\{v_k, v_l\} \subseteq S_{ij}$ 
for any $i \in \{i_1, \ldots, i_{r'}\}$ and 
any $j \in \{j_1, \ldots, j_{s'}\}$ 
and this implies that $m_M(\{v_k, v_l\}) = 
|\{S_{i,j} \in \mathcal{F} \mid \{v_k, v_l\} \subseteq S_{i,j} \}| 
\geq r's'=t'$. 
Thus $m_{M}(\{v_k, v_l\}) \geq m_{M'}(\{v_k, v_l\})$. 
Hence it holds that $m_{M}(\{v_k, v_l\}) = m_{M'}(\{v_k, v_l\})$ 
for any two distinct vertices $v_k$ and $v_l$, that is, 
$m_M = m_{M'}$, i.e., $M=M'$. 
So $M$ is the double competition multigraph of $D$. 
\end{proof}


A digraph $D$ is said to be \emph{loopless} 
if $D$ has no loops, i.e., 
$(v,v) \not\in A(D)$ holds for any $v \in V(D)$. 

\begin{theorem}\label{thm:2}
Let $M$ be a multigraph with $n$ vertices. 
Then, 
$M$ is the double competition multigraph of a loopless digraph 
if and only if 
there exist an ordering $(v_1, \ldots, v_n)$ 
of the vertices of $M$ 
and a double indexed edge clique partition 
$\{ S_{ij} \mid i,j \in [n] \}$ of $M$ 
such that the following conditions hold: 
\begin{itemize}
\item[{\rm (I)}] 
for any $i,j \in [n]$, if $|A_i \cap B_j| \geq 2$, then $A_i \cap B_j = S_{ij}$; 
\item[{\rm (II)}] 
for any $i,j \in [n]$, $v_i \not\in S_{ij}$ and $v_j \not\in S_{ij}$, 
\end{itemize}
where 
$A_i$ and $B_j$ are the sets defined as 
{\rm (\ref{eq:Ai})} and {\rm (\ref{eq:Bj})}. 
\end{theorem} 

\begin{proof}
First, we show the only-if part. 
Let $M$ be the double competition multigraph of a loopless digraph $D$. 
Let $(v_1, \ldots, v_n)$ be an ordering of the vertices of $D$. 
Let $S_{ij}$ $(i,j \in [n])$ be the sets defined as (\ref{eq:Sij}), 
and let $\mathcal{F}$ be the family defined as (\ref{eq:F}). 
Then $S_{ij}$ is 
a clique of $M$, 
and $\mathcal{F}$ is an edge clique partition of $M$. 
Moreover, we can show, as in the proof of Theorem \ref{thm:1}, 
that the condition (I) holds. 
Now we show that the condition (II) holds. 
Take any vertex $v_{k} \in S_{ij}$. 
Then $(v_i, v_k), (v_k, v_j) \in A(D)$. 
Since $D$ is loopless, we have $v_{i} \neq v_{k}$ and $v_{i} \neq v_{k}$. 
Therefore it follows that $v_{i} \not\in S_{ij}$ and $v_{j} \not\in S_{ij}$. 
Thus the condition (II) holds. 

Next, we show the if part. 
Let $M$ be a multigraph with $n$ vertices, and  
suppose that 
there exists an ordering $(v_1, \ldots, v_n)$ 
of the vertices of $M$ 
and a double indexed edge clique partition 
$\{ S_{ij} \mid i,j \in [n] \}$ of $M$ 
such that the conditions (I) and (II) hold. 
We define a digraph $D$ by 
$V(D) := V(M)$ and $A(D)$ given in (\ref{eq:AD}). 
By the condition (II), 
it follows from the definition of $D$ that 
$(v_i, v_i) \not\in A(D)$ for any $i \in [n]$. 
Therefore $D$ is a loopless digraph. 
Moreover we can show, as in the proof of Theorem \ref{thm:1}, 
that $M$ is the double competition multigraph of $D$. 
\end{proof}


A digraph $D$ is said to be \emph{reflexive} 
if all the vertices of $D$ have loops, i.e., 
$(v,v) \in A(D)$ holds for any $v \in V(D)$. 

\begin{theorem}\label{thm:3}
Let $M$ be a multigraph with $n$ vertices. 
Then, 
$M$ is the double competition multigraph of a reflexive digraph 
if and only if 
there exist an ordering $(v_1, \ldots, v_n)$ 
of the vertices of $M$ 
and a double indexed edge clique partition 
$\{ S_{ij} \mid i,j \in [n] \}$ of $M$ 
such that the following conditions hold: 
\begin{itemize}
\item[{\rm (I)}] 
for any $i,j \in [n]$, if $|A_i \cap B_j| \geq 2$, then $A_i \cap B_j = S_{ij}$; 
\item[{\rm (III)}] 
for any $i \in [n]$, $v_i \in S_{i*} \cup S_{*i}$, 
\end{itemize}
where 
$A_i$, $B_j$, $S_{i*}$, and $S_{*i}$ are the sets defined as 
{\rm (\ref{eq:Ai})} and {\rm (\ref{eq:Bj})}. 
\end{theorem}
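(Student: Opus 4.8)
The plan is to follow the same two-part structure used in the proofs of Theorem~\ref{thm:1} and the loopless case, reusing as much of that machinery as possible and only adding what is needed to handle the reflexivity constraint.

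For the only-if part, I would start from the double competition multigraph $M$ of a reflexive digraph $D$, fix an arbitrary ordering $(v_1,\dots,v_n)$ of $V(D)$, define the sets $S_{ij}$ by (\ref{eq:Sij}) and the family $\mathcal{F}$ by (\ref{eq:F}). Exactly as in Theorem~\ref{thm:1}, $\mathcal{F}$ is an edge clique partition of $M$ and condition (I) holds; I would simply cite that argument rather than repeat it. To get condition (III), fix $i \in [n]$ and use that $(v_i,v_i) \in A(D)$ since $D$ is reflexive. Then for any arc $(v_i,v_k) \in A(D)$ we have $(v_i,v_i),(v_i,v_k) \in A(D)$, so $v_i \in S_{ik} \subseteq S_{i*}$; and for any arc $(v_k,v_i) \in A(D)$ we have $(v_k,v_i),(v_i,v_i) \in A(D)$, so $v_i \in S_{ki} \subseteq S_{*i}$. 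Since $(v_i,v_i)$ is itself such an arc, $v_i \in S_{ii}$, hence $v_i \in S_{i*} \cup S_{*i}$, giving (III). (In fact $v_i \in S_{ii}$ alone suffices, but I would present it this way to mirror the structure.)

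For the if part, suppose we have the ordering and the double indexed edge clique partition $\mathcal{F}=\{S_{ij} \mid i,j \in [n]\}$ satisfying (I) and (III). Define $D$ by $V(D):=V(M)$ and $A(D)$ as in (\ref{eq:AD}). By the argument in the proof of Theorem~\ref{thm:1}, $M$ is the double competition multigraph of $D$, so it only remains to check that $D$ is reflexive, i.e.\ $(v_i,v_i) \in A(D)$ for every $i \in [n]$. By (III), either $v_i \in S_{i*}$ or $v_i \in S_{*i}$. If $v_i \in S_{i*}$, there is $p \in [n]$ with $v_i \in S_{ip}$, and then (\ref{eq:AD}) puts both $(v_i,v_i)$ and $(v_i,v_p)$ into $A(D)$, so in particular $(v_i,v_i) \in A(D)$. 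If instead $v_i \in S_{*i}$, there is $q \in [n]$ with $v_i \in S_{qi}$, and then (\ref{eq:AD}) puts both $(v_q,v_i)$ and $(v_i,v_i)$ into $A(D)$, again giving $(v_i,v_i) \in A(D)$. Hence $D$ is reflexive, completing the proof.

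I do not expect a serious obstacle here: the theorem is a routine variation on the preceding two, and the only genuinely new point is the translation between the loop condition $(v,v)\in A(D)$ and its combinatorial shadow on the sets $S_{ij}$. The one thing to be careful about is the bookkeeping in the if-direction — making sure that the definition (\ref{eq:AD}) really does force the loop in both sub-cases of (III) — and in the only-if direction, not overlooking that the loop arc $(v_i,v_i)$ itself is what places $v_i$ into $S_{ii}$. Everything about condition (I) and the equality $m_M=m_{M'}$ is inherited verbatim from Theorem~\ref{thm:1}, so I would explicitly defer to that proof rather than reproving it.
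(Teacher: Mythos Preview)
Your proposal is correct and follows essentially the same approach as the paper's proof: both directions defer to Theorem~\ref{thm:1} for condition~(I) and the equality $m_M=m_{M'}$, and handle condition~(III) by the obvious translation between the loop $(v_i,v_i)\in A(D)$ and membership $v_i\in S_{ip}$ or $v_i\in S_{pi}$ for some $p$. Your write-up is slightly more explicit than the paper's (you spell out both sub-cases of (III) in the if-direction and note that $v_i\in S_{ii}$ already suffices in the only-if direction), but there is no substantive difference.
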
 

\begin{proof}
First, we show the only-if part. 
Let $M$ be the double competition multigraph of a reflexive digraph $D$. 
Let $(v_1, \ldots, v_n)$ be an ordering of the vertices of $D$. 
Let $S_{ij}$ $(i,j \in [n])$ be the sets defined as (\ref{eq:Sij}), 
and let $\mathcal{F}$ be the family defined as (\ref{eq:F}). 
Then $S_{ij}$ is 
a clique of $M$, 
and $\mathcal{F}$ is an edge clique partition of $M$. 
Moreover, we can show, as in the proof of Theorem \ref{thm:1}, 
that the condition (I) holds. 
Now we show that the condition (III) holds. 
Since $D$ is reflexive, we have $(v_i, v_i) \in A(D)$ for any $i \in [n]$. 
Then it follows that 
there exists $p \in [n]$ such that $v_i \in S_{ip}$ or $v_i \in S_{pi}$. 
Therefore $v_{i} \in S_{i*} \cup S_{*i}$. 
Thus the condition (III) holds. 

Next, we show the if part. 
Let $M$ be a multigraph with $n$ vertices, and  
suppose that 
there exist an ordering $(v_1, \ldots, v_n)$ 
of the vertices of $M$ 
and a double indexed edge clique partition 
$\mathcal{F} = \{ S_{ij} \mid i,j \in [n] \}$ of $M$ 
such that the conditions (I) and (III) hold. 
We define a digraph $D$ by 
$V(D) := V(M)$ and $A(D)$ given in (\ref{eq:AD}). 
Fix any $i \in [n]$. 
By the condition (III), 
there exists $p \in [n]$ such that $v_i \in S_{ip}$ or $v_i \in S_{pi}$. 
Then it follows from the definition of $D$ that 
$(v_i, v_i) \in A(D)$. 
Therefore $D$ is a reflexive digraph. 
Moreover we can show, as in the proof of Theorem \ref{thm:1}, 
that $M$ is the double competition multigraph of $D$. 
\end{proof}


A digraph $D$ is said to be \emph{acyclic} 
if $D$ has no directed cycles. 
An ordering $(v_1, \ldots, v_{n})$ 
of the vertices of a digraph $D$, 
where $n$ is the number of vertices of $D$, 
is called an \emph{acyclic ordering} of $D$ 
if $(v_i,v_j) \in A(D)$ implies $i<j$. 
It is well known that 
a digraph $D$ is acyclic if and only if 
$D$ has an acyclic ordering. 

\begin{theorem}\label{thm:4}
Let $M$ be a multigraph with $n$ vertices. 
Then, 
$M$ is the double competition multigraph of an acyclic digraph 
if and only if 
there exist an ordering $(v_1, \ldots, v_n)$ 
of the vertices of $M$ 
and a double indexed edge clique partition 
$\{ S_{ij} \mid i,j \in [n] \}$ of $M$ 
such that the following conditions hold: 
\begin{itemize}
\item[{\rm (I)}] 
for any $i,j \in [n]$, if $|A_i \cap B_j| \geq 2$, then $A_i \cap B_j = S_{ij}$; 
\item[{\rm (IV)}] 
for any $i,j,k \in [n]$, $v_k \in S_{ij}$ implies $i < k < j$, 
\end{itemize}
where 
$A_i$ and $B_j$ are the sets defined as 
{\rm (\ref{eq:Ai})} and {\rm (\ref{eq:Bj})}. 
\end{theorem} 

\begin{proof}
First, we show the only-if part. 
Let $M$ be the double competition multigraph of an acyclic digraph $D$. 
Let $(v_1, \ldots, v_n)$ be an acyclic ordering of the vertices of $D$. 
Let $S_{ij}$ $(i,j \in [n])$ be the sets defined as (\ref{eq:Sij}), 
and let $\mathcal{F}$ be the family defined as (\ref{eq:F}). 
Then $S_{ij}$ is 
a clique of $M$, 
and $\mathcal{F}$ is an edge clique partition of $M$. 
Moreover, we can show, as in the proof of Theorem \ref{thm:1}, 
that the condition (I) holds. 
Now we show that the condition (IV) holds. 
Suppose that $v_{k} \in S_{ij}$. 
Then $(v_i, v_k), (v_k, v_j) \in A(D)$. 
Since $(v_1, \ldots, v_n)$ is an acyclic ordering of $D$, 
$(v_i, v_k) \in A(D)$ implies $i < k$ 
and 
$(v_k, v_j) \in A(D)$ implies $k < j$ 
Therefore $i < k < j$. 
Thus the condition (IV) holds. 

Next, we show the if part. 
Let $M$ be a multigraph with $n$ vertices, and  
suppose that 
there exist an ordering $(v_1, \ldots, v_n)$ 
of the vertices of $M$ 
and a double indexed edge clique partition 
$\{ S_{ij} \mid i,j \in [n] \}$ of $M$ 
such that the conditions (I) and (IV) hold. 
We define a digraph $D$ by 
$V(D) := V(M)$ and $A(D)$ given in (\ref{eq:AD}). 
By the condition (IV), 
it follows from the definition of $D$ that 
$(v_1, \ldots, v_n)$ is an acyclic ordering of $D$. 
Therefore $D$ is an acyclic digraph. 
Moreover we can show, as in the proof of Theorem \ref{thm:1}, 
that $M$ is the double competition multigraph of $D$. 
\end{proof}


\begin{rem}
The condition (I) in 
Theorems \ref{thm:1}, \ref{thm:2}, \ref{thm:3}, and \ref{thm:4} 
may be replaced by the following condition:  
\begin{itemize}
\item[{\rm (I)$'$}]
for any $i,j \in [n]$, $A_i \cap B_j = S_{ij}$. 
\end{itemize}
\end{rem}

\begin{proof}
If the condition (I)$'$ is satisfied, then so is 
the condition (I). 
Suppose that the condition (I) is satisfied. 
If $|A_i \cap B_j| \geq 2$, then 
it follows from the condition (I) that $A_i \cap B_j = S_{ij}$.  
If $|A_i \cap B_j| = 0$, then $A_i \cap B_j= \emptyset$. 
Since $S_{ij} \subseteq A_i \cap B_j$, we have $S_{ij} = \emptyset$. 
Therefore, $A_i \cap B_j = S_{ij}$. 
If $|A_i \cap B_j| = 1$, then $A_i \cap B_j= \{v_k\}$ for some $k \in [n]$. 
Since $S_{ij} \subseteq A_i \cap B_j$, 
we have $S_{ij} = \emptyset$ or $S_{ij} = \{v_k\}$. 
If $S_{ij} = \{v_k\}$, then $A_i \cap B_j = S_{ij}$. 
If $S_{ij} = \emptyset$, then we replace 
$S_{ij} = \emptyset$ by $S_{ij} = \{v_k\}$. 
Then $\mathcal{F}$ is still an edge clique partition of $M$, 
and $A_i \cap B_j = S_{ij}$. Thus the condition (I)$'$ holds. 
Hence the remark holds. 
\end{proof}


At the end of this paper, 
we mention two corollaries related to 
the edge clique partition number of a multigraph. 
Recall that 
the \emph{edge clique partition number} of a multigraph $M$ 
is the minimum size of an edge clique partition of $M$ 
and is denoted by $\theta^{*}(M)$.  
As a corollary of Theorem \ref{thm:1}, 
we obtain a necessary condition for multigraphs 
being the double competition multigraph of a digraph. 

\begin{cor}
If a multigraph $M$ with $n$ vertices 
is the double competition multigraph of a digraph, 
then $\theta^{*}(M) \leq n^2$. 
\end{cor}

For the double competition multigraphs of acyclic digraphs, 
we can improve the above upper bound for 
the edge clique partition numbers of multigraphs. 

\begin{cor}
If a multigraph $M$ with $n$ vertices 
is the double competition multigraph of an acyclic digraph, then 
$\theta^{*}(M) \leq \frac{1}{2}(n-2)(n-3)$.  
\end{cor}

\begin{proof}
Suppoe that a multigraph $M$ with $n$ vertices 
is the double competition multigraph of an acyclic digraph. 
Then, by Theorem \ref{thm:4}, 
there exist an ordering $(v_1, \ldots, v_n)$ 
of the vertices of $M$ 
and a double indexed edge clique partition 
$\{ S_{ij} \mid i,j \in [n] \}$ of $M$ 
satisfying the conditions (I) and (IV). 
It follows from the condition (IV) that, 
if $j \leq i+1$, then $S_{ij}=\emptyset$. 
If $j=i+2$, then $S_{ij}=\emptyset$ or $S_{ij}=\{v_{i+1}\}$, 
which does not cover an edge of $M$. 
Therefore, the family 
$\{ S_{ij} \mid i,j \in [n], i+3 \leq j \}$ 
is an edge clique partition of $M$. 
Thus the corollary holds.  
\end{proof}

\begin{rem}
In \cite{PS14}, the authors defined 
the \emph{double multicompetition number} $dk^*(M)$ 
of a multigraph $M$ 
to be the minimum nonnegative integer $k$ 
such that $M$ together with $k$ new isolated vertices is 
the double competition multigraph of some acyclic digraph. 
In this context, Theorem \ref{thm:4} 
gives a characterization of multigraphs 
whose double multicompetition number is equal to $0$. 
\end{rem}


\end{document}